\numberwithin{equation}{section}
\definecolor{Mygrey}{gray}{0.75}
\def\displayandname#1{\rlap{$\displaystyle\csname #1\endcsname$}%
                      \qquad \texttt{\char92 #1}}
\def\url@leostyle{%
  \@ifundefined{selectfont}{\def\UrlFont{\sf}}{\def\UrlFont{\small\ttfamily}}}
\newtheorem{thm}{Theorem}[section]
\newtheorem{pro}[thm]{Proposition}
\newtheorem{cla}[thm]{Claim}
\newtheorem{cor}[thm]{Corollary}
\theoremstyle{definition}
\newtheorem{df}[thm]{Definition}
\theoremstyle{remark}
\newtheorem{rem}[thm]{Remark}
\newtheorem{exa}[thm]{Example}
\begin{document}

\bibliographystyle{acm}

\title{Characteristic Polynomials of Supertropical Matrices}
  \author[Adi Niv]{ Adi Niv$^\dagger$ }

  \thanks{$^\dagger$ Department of Mathematics, Bar-Ilan University, Ramat Gan 52900, Israel.\newline
Email:  {\tt adi.niv@live.biu.ac.il}} 
\thanks{This paper is part of the author's Ph.D thesis, which was written at Bar-Ilan University under the supervision of Prof. L.\ H.\ Rowen.}

\begin{abstract} Supertropical matrix theory was investigated in ~\cite{STMA3}, whose terminology we follow. In this work we investigate eigenvalues, characteristic polynomials and coefficients of characteristic polynomials of supertropical matrices and their powers, and obtain the analog to the basic property of matrices  that any power of an eigenvalue of a matrix is an eigenvalue of the corresponding power of the matrix. 
\end{abstract}

\maketitle

\thispagestyle{myheadings}
\font\rms=cmr8
\font\its=cmti8
\font\bfs=cmbx8

\markright{}
\def\thepage{}

\section{Introduction}
The theory of supertropical matrices, laid out in ~\cite{STMA}, shows that although the ghost ideal in supertropical semifield  $R=T\bigcup G\bigcup \{-\infty \}$, where~$G$ is a copy of~$T$ obtained as the image of the one-to-one ghost map~$\nu:T\rightarrow G$ sending~$a\mapsto \nu(a)=a^{\nu}$,  helps to recover many of the classical matrix theory properties, supertropical matrices still show  a very undesired behavior when one  wants to use them to decompose the supertropical vector-space $R^n$ as the sum of the eigenspaces of a matrix.
 Previous work on the connection between supertropical matrices and their powers shows that the pathological behavior of a matrix, such as ghost level of its entries, its singularity, and dependency of its eigenvectors, can be avoided by passing to high enough powers of the matrix (see ~\cite{TA}). In this paper we obtain a direct result (Theorem 3.6), that for any matrix $A$, the characteristic polynomial of $A^m$ \textgravedbl ghost surpasses\textacutedbl the characteristic polynomial of $A$, for any natural $m$, and thus equality holds when they are tangible.

\vskip 0.25 truecm
\noindent We establish some fundamental definitions and properties for our work.

\begin{df}
A \textbf{track of a permutation $\pi\in S_n$} is the sequence $$a_{1,\pi(1)}a_{2,\pi(2)}\cdots a_{n,\pi(n)}$$ of $n$ entries of a matrix $A=(a_{i,j})\in M_n(R)$. We denote the identity permutation track, which lies on the main diagonal of a matrix, as the $Id$ track, and the permutation track which lies on the secondary diagonal of a matrix, that is~$a_{1,n}a_{2,n-1}\cdots a_{n-1,2}a_{n,1}$, as the~$-Id$ track.
\end{df}

\vskip 0.25 truecm

\begin{df}
We define the \textbf{tropical determinant} of a tangible matrix $A=(a_{i,j})$ to be the usual permanent

    $$|A| =\sum_{\sigma \in S_n}a_{1\sigma(1)}\cdots a_{n\sigma(n)}$$ (see ~\cite[\S 5]{MPA}).  We refer to the permutation track yielding the highest value in this sum as \textbf{the dominant permutation track}.
\end{df}

\vskip15pt

\vskip 0.25 truecm
We notice that over the supertropical structure the determinant is tangible whenever only one tangible permutation track dominates this sum, and ghost whenever either a ghost permutation track dominates it or at least two permutation tracks dominate it.
\vskip 0.25 truecm

\begin{thm} \textbf{(The rule of determinants)}
For $n\times n$ matrices A,B over the supertropical division semiring R, we have $|AB|=|A||B| + g,\ \ where\ g\in G$. In particular if $|AB|$ is tangible then $|AB|=|A||B|$.
\end{thm}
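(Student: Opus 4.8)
The plan is to expand $|AB|$ directly by distributivity and to recognize the ``permutation part'' of the expansion as $|A|\,|B|$, while showing that everything else is a ghost. Writing $(AB)_{i\sigma(i)}=\sum_{k=1}^{n}a_{ik}b_{k\sigma(i)}$ and distributing the product over the index $i$, one gets
$$|AB|=\sum_{\sigma\in S_n}\ \sum_{f\colon[n]\to[n]}\ \prod_{i=1}^{n}a_{i f(i)}\,b_{f(i)\sigma(i)}\;=\;\sum_{f\colon[n]\to[n]}T_f,\qquad T_f:=\sum_{\sigma\in S_n}\prod_{i=1}^{n}a_{i f(i)}\,b_{f(i)\sigma(i)},$$
where $f$ ranges over all self-maps of $\{1,\dots,n\}$. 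This manipulation uses only distributivity, which is available in any semiring, so no care about ghosts is needed yet.

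Next I would dispose of the bijective $f$. For $f\in S_n$ the factor $\prod_i a_{if(i)}$ does not depend on $\sigma$, and the substitution $j=f(i)$ turns $\prod_i b_{f(i)\sigma(i)}$ into $\prod_j b_{j,\sigma(f^{-1}(j))}$; since $\rho:=\sigma\circ f^{-1}$ ranges over all of $S_n$ as $\sigma$ does, $\sum_{\sigma}\prod_i b_{f(i)\sigma(i)}=\sum_{\rho\in S_n}\prod_j b_{j\rho(j)}=|B|$. Hence $T_f=\bigl(\prod_i a_{if(i)}\bigr)|B|$, and summing over $f\in S_n$ and factoring $|B|$ out (again by distributivity) yields $\sum_{f\in S_n}T_f=|A|\,|B|$ \emph{exactly}.

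The heart of the argument is that $T_f$ is a ghost for every non-bijective $f$. Such an $f$ is not injective, so fix $p\neq q$ with $f(p)=f(q)$ and consider the fixed-point-free involution $\sigma\mapsto\sigma':=\sigma\circ(p\ q)$ of $S_n$. Since $\sigma'$ agrees with $\sigma$ except that $\sigma'(p)=\sigma(q)$ and $\sigma'(q)=\sigma(p)$, and since $f(p)=f(q)$, the two $b$-factors involving $p$ and $q$ merely get interchanged: $b_{f(p)\sigma'(p)}b_{f(q)\sigma'(q)}=b_{f(p)\sigma(q)}b_{f(q)\sigma(p)}=b_{f(p)\sigma(p)}b_{f(q)\sigma(q)}$, while all other factors are unchanged. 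Thus $\prod_i a_{if(i)}b_{f(i)\sigma'(i)}=\prod_i a_{if(i)}b_{f(i)\sigma(i)}$, so the terms of $T_f$ fall into equal pairs, and as $x+x=x^\nu$ is a ghost for every $x$, each pair contributes a ghost; hence $T_f$ lies in the ghost ideal. I expect this bookkeeping — verifying that the \emph{full} product over $i$ is invariant under the involution — to be the only place requiring attention, but it reduces to the single identity displayed above.

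Finally, setting $g:=\sum_{f\notin S_n}T_f$, a sum of ghosts and therefore itself a ghost, we obtain $|AB|=|A|\,|B|+g$ with $g\in G$. For the last assertion: if $|AB|$ is tangible then $|A|\,|B|$ cannot be a ghost (a ghost plus anything is a ghost), so $|A|,|B|\in T$ and $|A|\,|B|\in T$; and a tangible element can equal ``itself plus a ghost'' only when that ghost summand is strictly $\nu$-dominated by it, in which case the sum collapses to $|A|\,|B|$. Hence $|AB|=|A|\,|B|$, as claimed.
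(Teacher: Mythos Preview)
Your argument is correct and is exactly the combinatorial mechanism the paper alludes to: the paper does not give its own proof here but cites \cite[Theorem~3.5]{STMA} and then explains informally that ``every monomial in $|AB|$ that does not appear in $|A||B|$ will appear twice,'' hence becomes ghost. Your expansion over all maps $f:[n]\to[n]$, the identification $\sum_{f\in S_n}T_f=|A|\,|B|$, and the transposition involution $\sigma\mapsto\sigma\circ(p\ q)$ for non-injective $f$ make that sketch rigorous; this is the standard proof (and the one in \cite{STMA}), so there is no methodological difference to report.
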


\noindent{\textbf{Proof.}} See Theorem 3.5 in ~\cite[\S 3]{STMA}.

\vskip 0.5 truecm

This property can easily be understood by observing the determinant function in classical linear algebra. We know that the result in this case would be~$|AB|=|A||B|$ because every monomial in $|AB|$ that does not appear in $|A||B|$  will appear twice with opposite signs. The meaning of this result in supertropical linear algebra is that every non-$|A||B|$ monomial appears twice, and therefore will create a ghost element. If a monomial from $|A||B|$  dominates then $|AB|=|A||B|$, and $|AB|\in G$ otherwise.

\vskip 0.5 truecm
\pagenumbering{arabic}
\addtocounter{page}{1}
\markboth{\SMALL ADI NIV }{\SMALL CHARACTERISTIC POLYNOMIALS OF SUPERTROPICAL MATRICES}

\begin{rem} \textbf{(The Frobenius property)}

\noindent a.	$\forall n \in$ $\mathbb{N}$,   $y \in R \quad \quad (x+y)^n=x^n + y^n +ghost$.

\noindent b.	If R is a supertropical algebra then $(x+y)^n=x^n + y^n$.

\end{rem}

\noindent{\textbf{Proof.}} See ~\cite[Remark 1.3]{STMA}.

\begin{df}
We say that $b$ is a $k$ \textbf{root} of $a$, for some $k\in N$, denoted as~$b= \sqrt[k]{a}$ if~$b^k=a$.
\end{df}

\begin{rem}
If $a,b\in T$ and $a^k=b^k$ then $a^k+b^k=(a+b)^k\in G$, therefore $a+b\in G$ and $a=b$. That is, the $k$ root of a tangible element is unique.
 \end{rem}

\section{Supertropical polynomials and the relation $\models_{gs}$.}

As one can see in ~\cite{STA}, the polynomials over the supertropical structure are rather simple to understand geometrically. Viewing a monomial $a_ix^i\in R[x]$ with the operations max-plus, it is easy to notice that the display of such a monomial is a line represented in the classical operations plus-multiplication as $ix+a_i$, and therefore the power $i$ of $x$ represents the slope of the monomial. Since $T$ is ordered we may present its elements on an axis, directed rightward, where if $a<b$ then $a$ appears left to $b$ on the $T$-axis, for every pair of distinct elements $a,b\in T$. It is now easy to understand that a supertropical polynomial $$\sum_{i=0}^n a_ix^i\in R[x]$$ takes the value of the dominant monomial among $a_ix^i$ along the $T$-axis. That having been said, it is possible that some monomials in the polynomial would not dominate for any $x\in R$ (see ~\cite[Definition 4.9]{STA}).

\begin{df}
Let $$f(x)=\sum_{i=0}^n a_ix^i\in R[x]$$ be a supertropical polynomial. We call monomials in $f(x)$ that dominate for some~$x\in R$ \textbf{essential}, and monomials in $f(x)$ that do not dominate for any $x\in R$ \textbf{inessential}. We write $$f^{es}=   \sum_{i\in I} a_ix^i\in R[x],$$ where $a_ix^i\ \ \forall i\in I$ is an essential monomial, called \textbf{the essential polynomial} of $f$.
 \end{df}

\begin{df}
We define an element $a \in R$ the \textbf{root} of a polynomial $f(x)$ if $$f(a)\in G\bigcup \{0_R\}$$.
\end{df}

We distinguish between roots of a polynomial that are obtained as common values of two leading tangible monomials and roots obtained as a value of a leading ghost monomial.

\begin{df} We refer to roots of a polynomial being obtained as an intersection of two leading tangible monomials as \textbf{corner roots}, and to  roots that are being obtained from one leading  ghost monomial as \textbf{non-corner roots}.
\end{df}

\vskip 0.25 truecm

\begin{rem} Suppose $f\in R[x]$.

\noindent 1. The constant term $a_0$ and the leading monomial $a_nx^n$ will dominate first and last, respectively, due to their slopes. Furthermore, they are the only ones that are \textbf{necessarily} essential in every polynomial.

\noindent 2. For the intersection between an essential monomial $a_ix^i$ and the next essential monomial $a_jx^j$ where $j>i$, we notice that the multiplicity of the root $\alpha _i$ is $k=j-i$,  because $a_ix^i$ dominates all lines between $a_ix^i$ and $a_jx^j$, or in other words $\frac{a_i}{a_j}x^i=b_ix^i$ dominates all lines between $\frac{a_i}{a_j}x^i=b_ix^i$ and $x^j$, and we get:

\begin{center}
    \noindent{$(x^j+b_{j-1}x^{j-1}+...+b _ix^i)=(x^j+b _ix^i)=x^i(x^k+b _i)$}

\noindent{$=x^i(x^k+(\sqrt[k]{b _i})^k)=x^i(x+\sqrt[k]{b _i})^k=x^i(x+\alpha _i)^k$.}
\end{center}

\end{rem}

\vskip 0.25 truecm

\begin{df}
We denote a polynomial $f$ as \textbf{a-primary} if $a$ is the only corner root of $f$.
\end{df}

\begin{cla}
For any $a$-primary polynomial $f\in R[x]$, $a=\sqrt[n]{\frac{ a_0}{a_n}}$.
\end{cla}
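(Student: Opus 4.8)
The plan is to exploit the geometric description of supertropical polynomials together with Remark 2.6(2), which already computes the multiplicity of a corner root as the difference of slopes of consecutive essential monomials. Since $f$ is $a$-primary, $a$ is by definition the \emph{only} corner root of $f$, i.e.\ there is only one intersection point of two consecutive leading tangible monomials. First I would pass to the essential polynomial $f^{es}$, which has the same graph as $f$ and hence the same corner roots; by Remark 2.6(1) the constant term $a_0$ and the leading monomial $a_nx^n$ are both essential, so they both survive in $f^{es}$.

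Next I would argue that $f^{es}$ has \emph{exactly} two essential monomials, namely $a_0$ and $a_n x^n$. Indeed, suppose $f^{es}=\sum_{i\in I}a_ix^i$ with $|I|\ge 3$; list the exponents in $I$ in increasing order $0=i_0<i_1<\dots<i_r=n$ with $r\ge 2$. Each consecutive pair $(a_{i_{s}}x^{i_{s}},a_{i_{s+1}}x^{i_{s+1}})$ produces a corner root (the point where one stops dominating and the next takes over), and these corner roots occur at strictly increasing positions along the $T$-axis because the slopes $i_0<i_1<\dots<i_r$ are strictly increasing — so distinct consecutive pairs cannot meet at the same $x$. That gives at least $r\ge 2$ distinct corner roots, contradicting $a$-primality. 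Hence $r=1$, so $f^{es}=a_nx^n+a_0$, with the single corner root $a$ of multiplicity $n$.

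It then remains to identify $a$. Applying Remark 2.6(2) to the single pair $(a_0,a_nx^n)$ with $i=0$, $j=n$, $k=n$: writing $b_0=\frac{a_0}{a_n}$, we have $f^{es}=a_n\bigl(x^n+b_0\bigr)=a_n\bigl(x+\sqrt[n]{b_0}\,\bigr)^n=a_n(x+\alpha_0)^n$, so the corner root is $\alpha_0=\sqrt[n]{b_0}=\sqrt[n]{\frac{a_0}{a_n}}$. By Remark 2.4 the tangible $n$-th root is unique, so $a=\alpha_0=\sqrt[n]{\frac{a_0}{a_n}}$, as claimed.

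The main obstacle I anticipate is the second step: making rigorous the claim that consecutive essential monomials yield \emph{distinct} corner roots, so that three or more essential monomials force two or more corner roots. This needs a careful use of the ordering of $T$ and of the fact that the slopes (the exponents in $I$) are strictly increasing — two lines of higher-difference slope meet strictly to the right of two lines of lower-difference slope in this arrangement — together with the observation that at a corner root of $f$ the two monomials attaining the maximum must be \emph{consecutive} essential monomials (no essential monomial is skipped over at its own corner). The remaining computations are routine given Remark 2.6 and the uniqueness of tangible roots in Remark 2.4.
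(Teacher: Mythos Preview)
Your argument is correct and follows essentially the same route as the paper: both proofs use that the constant term and the leading monomial are necessarily essential (dominating first and last along the $T$-axis), so a single corner root must be their intersection point, giving $a=\sqrt[n]{a_0/a_n}$. Your version is more detailed---in particular you make explicit why three or more essential monomials would force at least two distinct corner roots, a point the paper simply asserts---but the underlying idea is identical; note only that your cross-references are off by a couple of numbers (what you call Remark~2.6 is the paper's Remark~2.4, and the uniqueness of tangible $n$-th roots is Remark~1.6).
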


\begin{proof}
As a result of Remark 2.4. we know that the first and last monomials of the polynomial have to dominate all other monomials for the first and last segments respectively; therefore if there is only one corner root then it has to be the common value of the first monomial $a_0$ and the last monomial $a_nx^n$.
\end{proof}

\begin{df}
Let $a,b$ be any two elements in $R$. We say that $a$ \textbf{ghost surpasses} $b$, denoted $a\models_{gs} b$, if~$a=b+ghost$, i.e. $a=b\ $ or $a\in G$ with $a^{\nu}\geq b^{\nu}$.

For matrices $A=(a_{ij}),B=(b_{ij})\in M_{n\times m}(R)$ (and in particular for vectors) $A\models_{gs} B$ means $a_{ij}\models_{gs} b_{ij}\quad \forall i=1,...,n$ and $j=1,...,m$.

For polynomials~$f(x)=\sum a_ix^i,\ g(x)=\sum b_ix^i\in R_n[x]$, we say that $f(x)\models_{gs} g(x)$ when~$a_{i}\models_{gs}b_{i}\ \forall i=0,...,n$.
\end{df}

\begin{center}
    \textbf{Important properties of $\models_{gs}$:}
\end{center}

\noindent{1. $\models_{gs}$ is an order relation.}

See ~\cite[Lemma 1.5]{STMA2}.

\vskip 0.25 truecm

\noindent{2. If $a\models_{gs} b$ then $ac\models_{gs} bc$.}

\vskip 0.25 truecm

\section{Supertropical characteristic polynomials and eigenvalues.}

 We follow the description as studied in ~\cite[\S 5]{STMA2},

\begin{df}
$\forall v \in T^n \quad \mbox{and} \quad A \in M_n(T)$ such that $\exists x \in T $ such that $ Av\models_{gs} xv$ we say that $v$ is a \textbf{supertropical eigenvector}  of $A$ with a \textbf{supertropical eigenvalue}~$x$.

\noindent The \textbf{characteristic polynomial} of A is set to be $f_A(x)=|xI+A|$, and the tangible value of its roots are the eigenvalues of $A$ as shown in ~\cite[Theorem 7.10]{STMA}.
\end{df}

\begin{pro}
If $ x \in T$ is an eigenvalue of a matrix $A \in M_n(T)$, then $x^i$ is an eigenvalue of  $A^i$.
\end{pro}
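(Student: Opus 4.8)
The plan is to work directly from the definition of supertropical eigenvalue: $x\in T$ is an eigenvalue of $A$ means there is a tangible eigenvector $v\in T^n$ with $Av\models_{gs} xv$. I would try to iterate this relation. The natural first step is to apply $A$ to both sides of $Av\models_{gs} xv$ and use property~2 of $\models_{gs}$ together with the fact that left-multiplication by a matrix preserves $\models_{gs}$ (this is the entrywise statement $A\models_{gs} B\Rightarrow CA\models_{gs} CB$, which follows from property~2 applied entrywise and the fact that $\models_{gs}$ is compatible with the $+$ used to form matrix products). This gives $A^2v = A(Av)\models_{gs} A(xv) = x(Av)$, and then using $Av\models_{gs} xv$ once more together with property~2 (multiplying by the tangible scalar $x$) and transitivity of $\models_{gs}$ (property~1), we get $A^2v\models_{gs} x(xv) = x^2 v$. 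Inducting on $i$ in exactly this fashion yields $A^i v\models_{gs} x^i v$, so $v$ is a supertropical eigenvector of $A^i$ with eigenvalue $x^i$.

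The key steps in order: (i) record the lemma that left multiplication by a fixed matrix preserves $\models_{gs}$, i.e.\ if $u\models_{gs} w$ then $Au\models_{gs} Aw$ — each entry of $Au$ is a sum $\sum_j a_{kj}u_j$, each term ghost surpasses the corresponding term of $Aw$ by property~2, and a sum of ghost-surpassing terms ghost surpasses the corresponding sum (this last fact about $+$ and $\models_{gs}$ should be cited from the properties of the relation or is immediate from $a\models_{gs}b,c\models_{gs}d\Rightarrow a+c\models_{gs}b+d$); (ii) the base case $i=1$ is the hypothesis; (iii) the inductive step: assume $A^iv\models_{gs}x^iv$, apply step (i) with the matrix $A$ to get $A^{i+1}v\models_{gs}A(x^iv)=x^i(Av)$, then use $Av\models_{gs}xv$ scaled by the tangible $x^i$ (property~2) to get $x^i(Av)\models_{gs}x^i(xv)=x^{i+1}v$, and conclude by transitivity; (iv) observe $x^i$ is tangible since $x$ is, so $x^i$ is a genuine eigenvalue of $A^i$.

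The main obstacle — and it is a mild one — is making precise that $\models_{gs}$ interacts correctly with the two operations used to build $Av$, namely that it is preserved under scalar multiplication (given as property~2) and under addition; the addition compatibility may need to be invoked as a basic fact about the ghost-surpasses relation (it holds because if $a=b+\text{ghost}$ and $c=d+\text{ghost}$ then $a+c=b+d+\text{ghost}$, where one checks the $\nu$-ordering case by case). Everything else is a routine induction. One should also be slightly careful that the eigenvector $v$ is allowed to stay the same at every power — which is exactly what the definition permits — so no additional argument about existence of eigenvectors for $A^i$ is required. Once these compatibility facts are in place, the proof is a two-line induction.
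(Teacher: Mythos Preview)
Your proposal is correct and is essentially the paper's own argument: the paper also fixes an eigenvector $v$ with $Av\models_{gs}xv$ and performs the two-line induction $A^{i}v=A^{i-1}(Av)\models_{gs}xA^{i-1}v\models_{gs}x\cdot x^{i-1}v=x^{i}v$, relying (implicitly) on exactly the compatibility of $\models_{gs}$ with matrix multiplication and scalar multiplication that you spell out. Your version is in fact more careful, since you isolate and justify the lemma that left multiplication by a matrix preserves $\models_{gs}$, which the paper uses without comment.
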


\begin{proof}

\noindent $ x \in T$ is an eigenvalue of the matrix A so $\exists v \in T^n$ such that $Av\models_{gs}xv $.

\noindent If $i=2$:
 $$A^2v=AAv\models_{gs}xAv\models_{gs}xxv=x^2v.$$

\noindent We assume that the claim holds for $i-1$ and prove it for general $i$:
$$ A^iv=A^{i-1}Av \models_{gs}xA^{i-1}v\models_{gs}x\cdot x^{i-1}v=x^iv.$$
\end{proof}

\vskip 0.5 truecm

However, we notice that $\{x^i:x\quad \mbox{is an eigenvalue of a given matrix A}\}$ need not be the only eigenvalues of the matrix $A^i$, as shown in the next example.

\begin{exa}

Consider the $2\times 2$ matrix

$$A=
\left(
\begin{array}{cc}
0 & 0\\
1 & 2
\end{array}
\right).$$

\vskip 0.25 truecm

\noindent{Then $f_A(x)=x^2+2x+2 \Rightarrow f_A(x) \in G\quad \mbox{when}\quad x=0,2$.}

\vskip 0.25 truecm

\noindent{However,}

\vskip 0.25 truecm

$$A^2=
\left(
\begin{array}{cc}
1 & 2\\
3 & 4
\end{array}
\right).$$

\vskip 0.25 truecm

\noindent Which means $f_{A^2}(x)=x^2+4x+5^{\nu} \Rightarrow f_{A^2}(x) \in G \quad \mbox{when}\quad x^{\nu}\leq 1^{\nu} \quad \mbox{or}\quad x=4$.
\end{exa}

\vskip 0.25 truecm

\begin{rem}
We notice that the coefficient of $x^i$ in the characteristic polynomial is the sum of all determinants of $(n-i)\times (n-i)$ minors obtained by erasing $i$ rows and their corresponding columns. For example the coefficient of $x^{n-1}$ would be the sum of all determinants of $1\times 1$ minors obtained by erasing $n-1$ rows and their corresponding columns, yielding the trace. And the coefficient of $x^{0}$ would be the determinant.
\end{rem}

\begin{cla}
For every monomial $a_{i_1,\pi(i_1)} a_{i_2,\pi(i_2)}\cdots a_{i_k,\pi(i_k)}$  in the determinant of a $k\times k$ minor in~A (and therefore in the coefficient of $x^{n-k}$ in $f_{A}$) the term $$(a_{i_1,\pi(i_1)} a_{i_2,\pi(i_2)}\cdots a_{i_k,\pi(i_k)})^m$$ will appear exactly once in the coefficient of $x^{n-k}$ in $f_{A^m}$ since it will appear in (and only in) the permutation track of $\pi^m$ of the corresponding $k\times k$ minor in $A^m$.
\end{cla}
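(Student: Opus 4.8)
The plan is to argue entirely at the level of formal monomials in the entries $a_{ij}$ of $A$, reading off the coefficient of $x^{n-k}$ in $f_{A^m}$ combinatorially. By the preceding Remark this coefficient is the sum of the determinants of all $k\times k$ principal minors of $A^m$, that is of all submatrices $(A^m)|_S$ with $S=\{j_1,\dots,j_k\}\subseteq\{1,\dots,n\}$. Since $(A^m)_{i,j}=\sum a_{i,\ell_1}a_{\ell_1,\ell_2}\cdots a_{\ell_{m-1},j}$ with the sum running over all walks $i\to\ell_1\to\cdots\to\ell_{m-1}\to j$ of length $m$ in $\{1,\dots,n\}$, expanding such a determinant yields a sum of monomials, each a product of $k$ length-$m$ walks, one starting out of each row of $S$. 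I want to show that among all these monomials --- over all $S$, all permutations $\sigma$ of $S$, and all choices of the $k$ walks --- the monomial $(a_{i_1,\pi(i_1)}\cdots a_{i_k,\pi(i_k)})^m$ occurs for exactly one choice: $S=\{i_1,\dots,i_k\}$, $\sigma=\pi^m$, and the walks $i_t\to\pi(i_t)\to\pi^2(i_t)\to\cdots\to\pi^m(i_t)$.

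\emph{Existence.} Since $\pi$ permutes $\{i_1,\dots,i_k\}$ so does $\pi^m$, hence $\pi^m$ is a legitimate track of the $\{i_1,\dots,i_k\}$-minor of $A^m$, and for each $t$ the sequence $i_t\to\pi(i_t)\to\cdots\to\pi^m(i_t)$ is a length-$m$ walk from $i_t$ to $\pi^m(i_t)$, contributing the summand $\prod_{s=0}^{m-1}a_{\pi^s(i_t),\pi^{s+1}(i_t)}$ to $(A^m)_{i_t,\pi^m(i_t)}$. Multiplying these over $t=1,\dots,k$ and bookkeeping by the cycles of $\pi$: within a cycle $C$ of length $\ell$, the $\ell$ walks of length $m$ starting at the elements of $C$ together traverse the $\ell$ directed edges of $C$ a total of $\ell m$ times, and by cyclic symmetry each edge is traversed exactly $m$ times. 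Running over all cycles of $\pi$, the product of all walk-edges is $\prod_{t=1}^k a_{i_t,\pi(i_t)}^m$, as wanted.

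\emph{Uniqueness.} Suppose a monomial in the expansion equals $\prod_{s=1}^k a_{i_s,\pi(i_s)}^m$. It involves only the entries $a_{i_1,\pi(i_1)},\dots,a_{i_k,\pi(i_k)}$, so every edge of every constituent walk is one of these. The first edge of the walk out of row $j_t$ is an entry $a_{j_t,\cdot}$, which forces $j_t\in\{i_1,\dots,i_k\}$; as the $j_t$ are distinct, $S=\{i_1,\dots,i_k\}$. Working inside this minor, the walk out of $i_t$ starts with an edge $a_{i_t,\cdot}$, necessarily $a_{i_t,\pi(i_t)}$, so after one step it sits at $\pi(i_t)\in\{i_1,\dots,i_k\}$; iterating $m$ times forces the walk to be exactly $i_t\to\pi(i_t)\to\cdots\to\pi^m(i_t)$, hence $\sigma(i_t)=\pi^m(i_t)$. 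Thus $S$, $\sigma=\pi^m$ and all $k$ walks are uniquely determined, so the monomial occurs exactly once, namely in the $\pi^m$ track of the $\{i_1,\dots,i_k\}$-minor of $A^m$, as claimed.

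The step I expect to be the main obstacle is the cyclic bookkeeping in the existence part: one must check carefully that the forced walks reproduce precisely the $m$-th power of the original monomial --- each of the $k$ entries to exactly the power $m$, nothing left over --- including the case $m>\ell$ in which a single walk wraps several times around a short cycle. One should also bear in mind that the statement concerns formal monomials, equivalently tracks; over $R$ two combinatorially distinct monomials can coincide in value, but this does not affect the bookkeeping on which the later uses of the claim rely.
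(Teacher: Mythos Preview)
Your proposal is correct and follows essentially the same two-step route as the paper: existence via the walks $i_t\to\pi(i_t)\to\cdots\to\pi^m(i_t)$ inside the $\pi^m$ track of the same minor, and uniqueness by observing that only the entries $a_{\pi^\ell(i_j),\pi^{\ell+1}(i_j)}$ occur, so the walks, the permutation $\sigma=\pi^m$, and the index set $S$ are all forced. The only cosmetic difference is in the existence bookkeeping: the paper arranges the $km$ factors as a $k\times m$ array and notes that each column, after reindexing by the bijection $\pi$, is a copy of the original monomial, whereas you count edge traversals cycle by cycle; both arguments establish the same equality.
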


\begin{proof}

$\forall a_{i_j,\pi(i_j)}$ in the permutation track of $\pi \in S_k$ in every $k\times k$ minor in $A$, we look at the~$i_j,\pi ^m(i_j)$ position in $A^m$: $$\sum_{t_{j,l}=1}^n a_{i_j,t_{j,1}} a_{t_{j,1},t_{j,2}}\cdots a_{t_{j,m-1},\pi ^m(i_j)}.$$ In this sum there exists a monomial of the form $a_{i_j,\pi (i_j)}a_{\pi (i_j),\pi ^2(i_j)}\cdots a_{\pi ^{m-1}(i_j),\pi ^m(i_j)}$; therefore  in the permutation track of $\pi ^m$  in the corresponding $k\times k$ minor in $A^m$ there exists the monomial:

\begin{equation}[a_{i_1,\pi (i_1)}\ a_{\pi (i_1),\pi ^2(i_1)}\ \cdots\ a_{\pi ^{m-1}(i_1),\pi ^m(i_1)}]\cdot\end{equation}

\begin{center}
$\cdot [a_{i_2,\pi (i_2)}\ a_{\pi (i_2),\pi ^2(i_2)}\ \cdots\ a_{\pi ^{m-1}(i_2),\pi ^m(i_2)}]\cdot$          

 $\vdots$ 

$\ \cdot [a_{i_k,\pi (i_k)}\ a_{\pi (i_k),\pi ^2(i_k)}\ \cdots\ a_{\pi ^{m-1}(i_k),\pi ^m(i_k)}].$ 
\end{center}
\vskip 0.25 truecm

 Since $\pi \in S_k$, every $i_j$ has exactly one source and one image both belong to $\{i_1,...,i_k\}$, therefore, looking at the columns in 3.1 as rows:

\begin{equation}[a_{i_1,\pi (i_1)}\ \ \ \ a_{i_2,\pi (i_2)}\ \ \ \ \cdots\ \ \ \   a_{i_k,\pi (i_k)}]\cdot\end{equation}

\begin{center}   
$\cdot [a_{\pi (i_1),\pi ^2(i_1)}\ \ \ a_{\pi (i_2),\pi ^2(i_2)}\ \cdots\ \ \ a_{\pi (i_k),\pi ^2(i_k)}]\cdot$

 $\vdots$ 

$\cdot [a_{\pi ^{m-1}(i_1),\pi ^m(i_1)} a_{\pi ^{m-1}(i_2),\pi ^m(i_2)}\cdots a_{\pi ^{m-1}(i_k),\pi ^m(i_k)}]$
\end{center}

\vskip 0.25 truecm

\noindent and then rearranging each row, we obtain:
   
\begin{equation}[a_{i_1,\pi (i_1)}a_{i_2,\pi (i_2)}\cdots a_{i_k,\pi (i_k)}]\cdot\end{equation}
\begin{center}
$\cdot [a_{i_1,\pi (i_1)}a_{i_2,\pi (i_2)}\cdots a_{i_k,\pi (i_k)}]\cdot$

 $ \vdots$ 

$\cdot [a_{i_1,\pi (i_1)}a_{i_2,\pi (i_2)}\cdots a_{i_k,\pi (i_k)}]$

\end{center}

\noindent which means each $a_{i_j,\pi (i_j)}$ will appear exactly $m$ times in (3.1) yielding the monomial $$(a_{i_1,\pi(i_1)} a_{i_2,\pi(i_2)}\cdots a_{i_k,\pi(i_k)})^m$$ where $a_{i_1,\pi(i_1)} a_{i_2,\pi(i_2)}\cdots a_{i_k,\pi(i_k)}$ is in the monomial of the permutation track of $\pi$  in the corresponding~$k\times k$ minor in $A$. 

This monomial will not appear again in the coefficient of $x^{n-k}$ since the only entries in this monomial are of the form $a_{\pi ^l(i_j),\pi ^{l+1}(i_j)}$ and can only be followed by $a_{\pi ^{l+1}(i_j),\pi ^{l+2}(i_j)}$, for every $l=0,...,m,\ \ j=1,...,k$, in this monomial, which yields only the  monomials  $$a_{i_j,\pi (i_j)}a_{\pi (i_j),\pi ^2(i_j)}\cdots a_{\pi ^{m-1}(i_j),\pi ^m(i_j)}$$ of length $m$, which appear only on the permutation track of $\pi ^m$ of the unique $k\times k$-minor that includes $i_1,...,i_k$. 
\end{proof}

\begin{thm}
Let $f_{A^m}(x)=\sum \beta _ix^i$ be the characteristic polynomial of the $m^{th}$ power of an $n\times n$ matrix $A$, and $f_{A}(x)=\sum \alpha_ix^i$ be the characteristic polynomial of $A$. Then~$f_{A^m}(x^m) \vDash f_A(x)^m$, which means $$\beta _i\vDash \alpha _i^m \ \ \forall i=1,...,n.$$
\end{thm}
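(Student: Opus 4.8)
The plan is to compare the two characteristic polynomials coefficient by coefficient. Fix $i$ and set $k=n-i$. By Remark 3.4, $\alpha_i=\sum_{|S|=k}|A[S]|$ and $\beta_i=\sum_{|S|=k}|(A^m)[S]|$, where $S$ ranges over the $k$-element index sets and $A[S]$, $(A^m)[S]$ are the corresponding $k\times k$ principal submatrices. Expanding each permanent into its tracks, $\alpha_i=\sum_{(S,\pi)}\mu_{S,\pi}$, where $(S,\pi)$ runs over all pairs of a $k$-set $S=\{i_1,\dots,i_k\}$ and a permutation $\pi$ of $S$, with $\mu_{S,\pi}=\prod_{v\in S}a_{v,\pi(v)}$. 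By Claim 3.5 each monomial $\mu_{S,\pi}^{\,m}$ occurs exactly once among the track monomials of the $k\times k$ principal submatrices of $A^m$, so we may split $\beta_i=P_1+P_2$, where $P_1$ collects exactly the monomials of the form $\mu_{S,\pi}^{\,m}$ (each once) and $P_2$ collects all the remaining track monomials of those submatrices. Since $P_1^{\nu}=\bigl(\max_{(S,\pi)}\mu_{S,\pi}^{\nu}\bigr)^{m}=(\alpha_i^{\nu})^{m}=(\alpha_i^{m})^{\nu}$, we get $\beta_i^{\nu}\ge(\alpha_i^{m})^{\nu}$ in every case.

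The core of the argument is that $P_2\in G\cup\{0_R\}$: every track monomial of a $k\times k$ principal submatrix of $A^m$ which is \emph{not} of the form $\mu_{S,\pi}^{\,m}$ is produced by at least two distinct (minor, track, intermediate-vertex) expansions, hence occurs in the sum $\beta_i$ with multiplicity at least two and contributes a ghost — the analogue, for sums of principal minors, of the cancellation behind the rule of determinants (Theorem 1.3). To prove this, encode such a monomial by a configuration: a $k$-set $S=\{i_1,\dots,i_k\}$, a permutation $\sigma$ of $S$, and, for each $j$, a walk $W_j=(i_j=v_0^{(j)}\to v_1^{(j)}\to\cdots\to v_m^{(j)}=\sigma(i_j))$ of length $m$ along nonzero entries of $A$; through the cycle structure of $\sigma$ these chain into a union of closed walks of total length $km$. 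If two walks $W_j,W_{j'}$ $(j\ne j')$ pass through a common vertex at the same step, swapping their tails there produces another configuration carrying the same monomial but a different permutation (different, since $\sigma$ is injective on $S$). If no two walks ever meet at a common step, then shifting every walk one step forward produces a configuration whose start-set $\{v_1^{(j)}\}$ is again of size $k$ and carries the same monomial; either this is a genuinely new configuration, or it coincides with the original one, and then each $W_j$ is forced to have the form $i_j\to\pi(i_j)\to\cdots\to\pi^m(i_j)$ for a single permutation $\pi$ of $S$, so the monomial is after all $\mu_{S,\pi}^{\,m}$, contradicting the hypothesis. I expect this combinatorial bookkeeping — checking that tail-swapping and one-step shifting really do yield a \emph{distinct} configuration except precisely in the permutation-iterate case — to be the one genuine obstacle; everything else is routine work with the definitions.

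Granting $P_2\in G\cup\{0_R\}$, the theorem follows. If $\beta_i\in G$, then $\beta_i^{\nu}\ge(\alpha_i^{m})^{\nu}$ gives $\beta_i\models_{gs}\alpha_i^{m}$ directly from the definition of $\models_{gs}$. If $\beta_i$ is tangible, then the ghost $P_2$ must have $\nu$-value strictly below that of $P_1$ (otherwise $\beta_i$ would be a ghost), so $\beta_i=P_1$ is tangible; being tangible, $P_1$ has a unique dominant monomial, which (as $P_1$ is built from the $\mu_{S,\pi}^{\,m}$) is some $\mu_{S_0,\pi_0}^{\,m}$ and is tangible, so $\mu_{S_0,\pi_0}$ is tangible; standard arguments with $\nu$ and with the uniqueness of tangible $m$-th roots (Remark 1.6) then show $\mu_{S_0,\pi_0}$ is likewise the unique dominant monomial of $\alpha_i$, so $\alpha_i=\mu_{S_0,\pi_0}$ and $\alpha_i^{m}=\mu_{S_0,\pi_0}^{\,m}=\beta_i$. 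In either case $\beta_i\models_{gs}\alpha_i^{m}$ for every $i$, which is the asserted relation $f_{A^m}(x^m)\models_{gs}f_A(x)^m$; and when both characteristic polynomials are tangible each $\beta_i$ and $\alpha_i^m$ is tangible, so $\beta_i=\alpha_i^m$, giving equality.
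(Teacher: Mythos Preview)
Your argument is correct and follows essentially the same route as the paper: both proofs invoke Claim~3.5 to account for the monomials $\mu_{S,\pi}^{\,m}$ and then show every remaining monomial in $\beta_i$ appears at least twice by a shift/swap operation on the underlying walks (your tail-swap corresponds to the paper's Cases~II and~III, your one-step shift to Case~I, and you are in fact more explicit than the paper in verifying that the shift yields a genuinely new configuration unless the walks already come from iterating a permutation). One simplification: your final case analysis on whether $\beta_i$ is ghost or tangible is unnecessary, since by the Frobenius property (Remark~1.4) one has $P_1=\sum_{(S,\pi)}\mu_{S,\pi}^{\,m}=\bigl(\sum_{(S,\pi)}\mu_{S,\pi}\bigr)^{m}=\alpha_i^{\,m}$ directly, so $\beta_i=P_1+P_2=\alpha_i^{\,m}+\text{ghost}$ gives $\beta_i\models_{gs}\alpha_i^{\,m}$ immediately from Definition~2.7.
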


\begin{proof}

\noindent Looking at the monomials in the $(i_j,\sigma(i_j))$ position of $A^m$: $$a_{i_j,t_{j,1}} a_{t_{j,1},t_{j,2}}\cdots a_{t_{j,m-1},\sigma(i_j)}$$ as tracks from $i_j$ to $t_{j,1}$ to $t_{j,2}$ to ... to $\sigma (i_j)$, we may say that  $$\sum_{t_{j,l}=1}^n a_{i_j,t_{j,1}} a_{t_{j,1},t_{j,2}}\cdots a_{t_{j,m-1},\sigma (i_j)}$$ describes all the ways to get from  $i_j$ to $\sigma(i_j)\in S_k$ in $m$ steps. Therefore a permutation track of~$\sigma\in S_k$ in a $k\times k$ minor in $A^m$ describes all the ways to obtain a permutation track from~$i_1$ to $\sigma (i_1)$, from $i_2$ to $\sigma (i_2)$,..., from $i_k$ to $\sigma (i_k)$. 

According to Claim 3.5, for every monomial $a_{i_1,\pi(i_1)} a_{i_2,\pi(i_2)}\cdots a_{i_k,\pi(i_k)}$  from a $k\times k$ minor in~$A$,~$(a_{i_1,\pi(i_1)} a_{i_2,\pi(i_2)}\cdots a_{i_k,\pi(i_k)})^m$ will appear in the $\pi^m$ permutation track in the corresponding minor in $A^m$. So we already have obtained $b_i=a_i^m+g,\ g\in R$ where~$k=n-i$. It remains to show that $g\in G$.

\noindent If
 \begin{equation}[a_{i_1,t_{1,1}} a_{t_{1,1},t_{1,2}}\cdots a_{t_{1,m-1},\sigma (i_1)}]   [a_{i_2,t_{2,1}} a_{t_{2,1},t_{2,2}}\cdots a_{t_{2,m-1},\sigma (i_2)}]  \cdots   [a_{i_k,t_{k,1}} a_{t_{k,1},t_{k,2}}\cdots a_{t_{k,m-1},\sigma (i_k)}]\end{equation}

 \noindent from a $k\times k$-minor in~$A^m$ is not of the form~$(a_{i_1,\pi(i_1)}a_{i_2,\pi(i_2)}\cdots a_{i_k,\pi(i_k)})^m$ for some~$\pi \in S_k$ such that~$\pi ^m=\sigma$, then we factor the permutation $\sigma$ into disjoint cycles:
$$(j_1 , \sigma (j_1), \sigma ^2(j_1),..., \sigma ^{k_1}(j_1))\circ(j_2, \sigma (j_2), \sigma ^2(j_2),..., \sigma ^{k_2}(j_2))\circ...\circ(j_d , \sigma (j_d), \sigma ^2(j_d),..., \sigma ^{k_d}(j_d)),$$
\noindent where
\footnotesize $$\{j_1 , \sigma (j_1), \sigma ^2(j_1),..., \sigma ^{k_1}(j_1)\}\bigcup\{j_2, \sigma (j_2), \sigma ^2(j_2),..., \sigma ^{k_2}(j_2)\}\bigcup...\bigcup\{j_d , \sigma (j_d), \sigma ^2(j_d),..., \sigma ^{k_d}(j_d)\}$$ \normalsize coincides with $\{i_1,...,i_k\} $ and $\sigma ^{t_1} (j_l)\ne \sigma ^{t_2}(j_l)$ where ${t_1}\ne {t_2}$, for every $l=1,...,d$, unless~${t_1}=0, {t_2}=k_l+1$, and then $\sigma^{k_l+1}(j_l)=j_l$. 

\vskip 0.5 truecm

We rewrite (3.4) as:  
\begin{equation}([a_{ j_1,t_{1_{1,1}}} a_{t_{1_{1,1}},t_{1_{1,2}}}\cdots a_{t_{1_{1,m-1}},\sigma (j_1)}] \cdots[a_{\sigma ^{k_1}(j_1),t_{1_{k_1,1}}} a_{t_{1_{k_1,1}},t_{1_{k_1,2}}}\cdots a_{t_{1_{k_1,m-1}}, \sigma ^{(k_1+1)}(j_1)}])\cdot\end{equation}
\begin{center}
 $\cdot ([a_{j_2,t_{2_{1,1}}} a_{t_{2_{1,1}},t_{2_{1,2}}}\cdots a_{t_{2_{1,m-1}},\sigma (j_2)}]   \cdots [a_{\sigma ^{k_2}(j_2),t_{2_{k_2,1}}} a_{t_{2_{k_2,1}},t_{2_{k_2,2}}}\cdots a_{t_{2_{k_2,m-1}}, \sigma ^{(k_2+1)}(j_2)}])\cdot$

$\vdots$
 
$\cdot([a_{ j_d,t_{d_{1,1}}}\ \cdots \ a_{t_{d_{1,m-1}},\sigma (j_d)}] \cdots [a_{\sigma ^{k_d}(j_d),t_{d_{k_d+1,1}}} a_{t_{d_{k_d+1,1}},t_{d_{k_d+1,2}}}\ \cdots \  a_{t_{d_{k_d+1,m-1}}, \sigma ^{(k_d+1)}(j_d)}])$
\end{center}

\vskip 0.25 truecm

\noindent where $[\  ]$ denotes entries from $A^m$ and $(\  )$ denotes the beginning and end of a cycle, a notation we will use throughout the whole proof. 

For convenience we write 3.5 as:
$$\prod_{r=1}^d ([a_{ j_r,t_{r_{1,1}}} a_{t_{r_{1,1}},t_{r_{1,2}}}\cdots a_{t_{r_{1,m-1}},\sigma (j_r)}]\cdots   [a_{\sigma ^{k_r}(j_r),t_{r_{k_r,1}}} a_{t_{r_{k_r,1}},t_{r_{k_r,2}}}\cdots a_{t_{r_{k_r,m-1}}, \sigma ^{(k_r+1)}(j_r)}])$$

\noindent and we would want to show that there exists another permutation track (also in a $k\times k$ minor) where this monomial will appear again. 
\vskip 0.25 truecm

In order to do so we observe the first index $t_{r_{i,1}}$  which appears in the second element of each entry from $A^m$:

$$a_{t_{r_{i,1}},t_{r_{i,2}}}\ \ \forall r=1,...,d,\ i=1,...,k_r.$$
\vskip 0.25 truecm

 \noindent \underline{Case I:} All of these indices are different. Then we may begin every cycle with the second element and end it with the original first element:

$$\prod_{r=1}^d ([a_{t_{r_{1,1}},t_{r_{1,2}}}\cdots a_{t_{r_{1,m-1}},\sigma (j_r)} a_{ \sigma (j_r),t_{r_{2,1}}} ]   [a_{t_{r_{2,1}},t_{r_{2,2}}}\cdots a_{t_{r_{2,m-1}},\sigma ^2(j_r)} a_{ \sigma (j_r),t_{r_{3,1}}}]\cdots$$
$$\cdots [a_{t_{r_{k_r,1}},t_{r_{k_r,2}}}\cdots a_{t_{r_{k_r,m-1}}, \sigma ^{(k_r+1)}(j_r)} a_{ j_r,t_{r_{1,1}}}]).$$
\vskip 0.25 truecm

\noindent The fact that there are $k$ different indices means that these new disjoint cycles would describe a permutation (with the same cycle sizes as $\sigma$), which graphically may be represented as: 
\vskip 0.25 truecm

\includegraphics{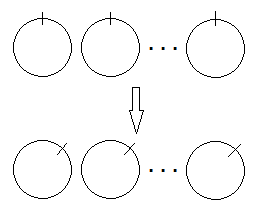}

\noindent For example: $([a_{1,1}a_{1,2}][a_{2,2}a_{2,1}])=([a_{1,2}a_{2,2}][a_{2,1}a_{1,1}])$. 
\vskip 0.15 truecm

 \noindent \underline{Case II:} Some index $t_{r_i,1}$ repeats in the same cycle. Now we may factor this cycle into two sub-cycles and not change any other cycle:

$$( [a_{ j_r,t_{r_{1,1}}} a_ {\underline{\underline{t_{r_{1,1}}}},t_{r_{1,2}}}\cdots a_{t_{r_{1,m-1}},\sigma (j_r)}][a_{ \sigma (j_r),t_{r_{2,1}}} a_{t_{r_{2,1}},t_{r_{2,2}}}\cdots a_{t_{r_{2,m-1}},\sigma ^2(j_r)}]\cdots$$ $$\cdots[a_{ \sigma ^{i-1} (j_r),t_{r_{1,1}}} a_{\underline{ t_{r_{1,1}}},t_{r_{i,2}}}\cdots a_{t_{r_{i,m-1}},\sigma ^i (j_r)}][a_{\sigma ^{k_r}(j_r),t_{r_{k_r,1}}} a_{t_{r_{k_r,1}},t_{r_{k_r,2}}}\cdots a_{t_{r_{k_r,m-1}}, \sigma ^{(k_r+1)}(j_r)}])=$$
$$( [a_{ j_r,t_{r_{1,1}}} a_{\underline{ t_{r_{1,1}}},t_{r_{i,2}}}\cdots a_{t_{r_{i,m-1}},\sigma ^i (j_r)}]\cdots[a_{\sigma ^{k_r}(j_r),t_{r_{k_r,1}}} a_{t_{r_{k_r,1}},t_{r_{k_r,2}}}\cdots a_{t_{r_{k_r,m-1}},\sigma ^{(k_r+1)}(j_r)}])\cdot$$ $$ ([a_{ \sigma (j_r),t_{r_{2,1}}} a_{t_{r_{2,1}},t_{r_{2,2}}}\cdots a_{t_{r_{2,m-1}},\sigma ^2(j_r)}]\cdots[a_{ \sigma ^{i-1} (j_r),t_{r_{1,1}}} a_{\underline{\underline{t_{r_{1,1}}}},t_{r_{1,2}}}\cdots a_{t_{r_{1,m-1}},\sigma (j_r)}])$$

\noindent yielding a new permutation, which graphically is represented as: 

\includegraphics{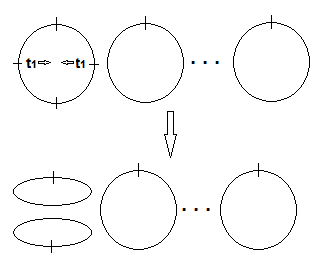}

\noindent For example:  $([a_{1,2}a_{2,2}][a_{2,2}a_{2,1}])=([a_{1,2}a_{2,1}])([a_{2,2}a_{2,2}])$. 
\vskip 0.25 truecm

 \noindent \underline{Case III:}  Some index $t_{r_i,1}$ repeats in two different cycles. Now we may join these cycles into a single cycle and not change any other cycle:

$$([a_{ j_r,t_{r_{1,1}}} a_{\underline{\underline{ t_{r_{1,1}}}},t_{r_{1,2}}}\cdots a_ { t_{r_{1,m-1}},\sigma (j_r)}]\cdots  [a_{\sigma ^{k_r}(j_r),t_{r_{k_r,1}}} \cdots a_{t_{r_{k_r,m-1}}, \sigma ^{(k_r+1)}(j_r)}])\cdot$$
$$([a_{ j_v,t_{v_{1,1}}}\cdots a_{t_{v_{1,m-1}},\sigma (j_v)}]\cdots [a_{ \sigma ^{i-1} (j_v),t_{r_{1,1}}} a_{\underline{ t_{r_{1,1}}},t_{v_{i,2}}}\cdots a_{t_{v_{i,m-1}},\sigma ^i (j_v)}] \cdots  [a_{\sigma ^{k_v}(j_v),t_{v_{k_v,1}}} \cdots a_{t_{v_{k_v,m-1}}, \sigma ^{(k_v+1)}(j_v)}])=$$
$$([a_{ j_r,t_{r_{1,1}}} a_{\underline{ t_{r_{1,1}}},t_{v_{i,2}}}\cdots a_{t_{v_{i,m-1}},\sigma ^i (j_v)}] \cdots  [a_{\sigma ^{k_v}(j_v),t_{v_{k_v,1}}} \cdots  a_{t_{v_{k_v,m-1}}, \sigma ^{(k_v+1)}(j_v)}][a_{ j_v,t_{v_{1,1}}} \cdots a_{t_{v_{1,m-1}},\sigma (j_v)}]\cdots$$ $$\cdots [a_{ \sigma ^{i-1} (j_v),t_{r_{1,1}}} a_{\underline{\underline{ t_{r_{1,1}}}},t_{r_{1,2}}}\cdots a_ { t_{r_{1,m-1}},\sigma (j_r)}]    \cdots  [a_{\sigma ^{k_r}(j_r),t_{r_{k_r,1}}} \cdots a_{t_{r_{k_r,m-1}}, \sigma ^{(k_r+1)}(j_r)}])$$

\vskip 0.25 truecm

\noindent yielding a new permutation, which graphically is represented as: 

\includegraphics{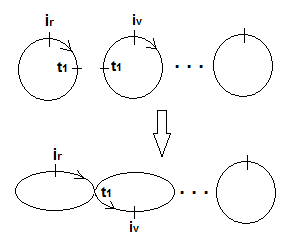}

\vskip 0.5 truecm

For example:   $([a_{1,2}a_{2,2}][a_{2,2}a_{2,1}])([a_{3,2}a_{2,4}][a_{4,2}a_{2,3}])=([a_{1,2}a_{2,4}][a_{4,2}a_{2,3}][a_{3,2}a_{2,2}][a_{2,2}a_{2,1}])$.

\vskip 0.5 truecm

To summarize, let~$f_{A^m}(x)=\Sigma b_ix^i$ and~$f_{A}(x)=\Sigma a_ix^i$. Then, $a_i^m$  appears as a monomial in~$b_i$ for every~$i=0,...,n$, and every other monomial in~$b_i$ will appear twice. Thus~$b_i\vDash a_i^m\ \ \forall i=0,...,n$ and therefore~$f_{A^m}(x^m)\vDash f_A(x)^m$.
\end{proof}

\begin{cor}
If the coefficients of $f_{A^m}$ are tangible then $f_{A^m}(x^m)=f_A(x)^m$.
\end{cor}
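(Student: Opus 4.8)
The plan is to read the statement off directly from Theorem 3.6 together with the fact that $\models_{gs}$ degenerates to honest equality on tangible elements. Write $f_{A^m}(x)=\sum_i\beta_ix^i$ and $f_A(x)=\sum_i\alpha_ix^i$ as in Theorem 3.6, so that $\beta_i\models_{gs}\alpha_i^m$ for every $i=0,\dots,n$. By Definition 2.7 this means that for each $i$ either $\beta_i=\alpha_i^m$, or else $\beta_i\in G$ with $\beta_i^{\nu}\ge(\alpha_i^m)^{\nu}$. The hypothesis is that every coefficient $\beta_i$ of $f_{A^m}$ is tangible, so the second alternative is impossible (an element of $T$ is never in $G$); hence $\beta_i=\alpha_i^m$ for all $i$.

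It then remains to repackage these scalar equalities into the asserted polynomial identity. On one side $f_{A^m}(x^m)=\sum_i\beta_i(x^m)^i=\sum_i\beta_ix^{mi}=\sum_i\alpha_i^mx^{mi}$, using the previous paragraph in the last step. On the other side $f_A(x)^m=\bigl(\sum_i\alpha_ix^i\bigr)^m$, and by the Frobenius property (Remark 1.4, applied in the supertropical algebra $R$) this equals $\sum_i(\alpha_ix^i)^m=\sum_i\alpha_i^mx^{mi}$. Comparing the two expressions gives $f_{A^m}(x^m)=f_A(x)^m$.

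I do not anticipate any genuine obstacle here: the entire combinatorial content has already been carried in Claim 3.5 and Theorem 3.6, and the corollary is a purely formal consequence of collapsing $\models_{gs}$ under the tangibility assumption. The one point worth a careful word is the use of the Frobenius identity with exact equality, rather than merely up to a ghost term, when expanding $f_A(x)^m$; in the max-plus semifield $R$ this is precisely Remark 1.4(b), so no ghost correction survives and the equality is literal. As an aside one can also note that $\beta_i=\alpha_i^m\in T$ forces, by the uniqueness of $m$-th roots of tangible elements, each $\alpha_i$ itself to be tangible, so $f_A$ has tangible coefficients as well, although this is not needed for the statement.
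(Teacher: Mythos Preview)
Your proof is correct and matches the paper's intended reading: the corollary is stated without proof because it is meant to follow immediately from Theorem~3.6 via the observation that a tangible element can ghost-surpass another only by being equal to it. Your repackaging step through the Frobenius property (Remark~1.4(b)) is precisely the identification the paper already makes in the statement of Theorem~3.6, where $f_A(x)^m$ is treated coefficient-wise as $\sum_i\alpha_i^m x^{mi}$.
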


\begin{exa} $ $

\noindent  1. Looking at the constant coefficients $\alpha _0,\ \beta _0$, which represent the determinants of $A,\ A^m$ respectively, this theorem provides an alternative proof of Theorem~1.3 in the special case that $B$ is a power of the matrix $A$. 

$ $

\noindent 2. Let $A=(a_{ij})$ be an $n\times n$ matrix. For any $m\in \mathbb{N}$ we have $tr(A^m)\vDash (tr(A))^m$.

\noindent Here is a short, self-contained proof.

\noindent Every diagonal entry $b_{i,i}$ in $$A^m=(b_{i,j})=(\sum_{k_t=1}^n a_{i,k_1}a_{k_1,k_2}\cdots a_{k_{m-1},j})$$ is a sum of monomials such that one of the monomials (where $k_t=i,\ \ \forall t$) is $(a_{i,i})^m$ and for every other monomial $(a_{i,k_1}a_{k_1,k_2}\cdots a_{k_{m-1},i})$ there exists $t\in \{1,...,n\}$ such that~$k_t\ne i$. Hence there exists an equal monomial $(a_{k_t,k_{t+1}}\cdots a_{k_{m-1},i}a_{i,k_1}\cdots a_{k_{t-1},k_{t}})$ in the $(k_t,k_t)$ position of $A^m$. Therefore  $$tr(A^m)=\Sigma_{i=1}^n b_{i,i}=\Sigma_{i=1}^n (a_{i,i})^m+ghost=(tr(A))^m+ghost.$$ 

\end{exa}

\vskip 0.5 truecm

\begin{exa}
Let $$A=(a_{i,j})\in M_3(R).$$

 \noindent Therefore, $f_A(x)=x^3+tr(A)x^2+\alpha _1x+detA$. We already saw the  property $\beta _i\vDash \alpha _i^m$ for the trace and the determinant. We look at the coefficient of $x$, $\alpha_1$, which is the sum of all determinants of the~$\ 3-1\times 3-1=2\times 2$ minors obtained by erasing one row and its corresponding column:
$$\alpha _1=\underbrace{(\underbrace{a_{1,1}a_{2,2}}_\text{Id track}+\underbrace{a_{1,2}a_{2,1}}_\text{-Id track})}_\text{$1^{st}\ 2\times 2\ minor$}+
\underbrace{(\underbrace{a_{1,1}a_{3,3}}_\text{Id track}+\underbrace{a_{1,3}a_{3,1}}_\text{-Id track})}_\text{$2^{nd}\ 2\times 2\ minor$}+
\underbrace{(\underbrace{a_{2,2}a_{3,3}}_\text{Id track}+\underbrace{a_{2,3}a_{3,2}}_\text{-Id track})}_\text{$3^{rd}\ 2\times 2\ minor$}$$.

\vskip 0.5 truecm

Let us consider
$$A^2=(\sum_{t=1}^3 a_{i,t}a_{t,j})$$
 We look at the corresponding coefficient $\beta _1$ in $f_{A^2}$ which is obtained from  three $2\times 2$ minors, each contributing two permutation tracks as described:

\vskip 0.5 truecm

\noindent $-Id$ tracks in $A^2$  of the form $\sum( a_{i,t_1}a_{t_1,j})(a_{j,t_2}a_{t_2,i}),\ i\ne j$,

\noindent $Id$ tracks  in $A^2$  of the form $\sum (a_{i,t_1}a_{t_1,i})(a_{j,t_2}a_{t_2,j}),\ i\ne j$.

\vskip 0.5 truecm

\begin{center} The monomials originated from $\alpha _1$:\end{center}

 Every Id permutation track $a_{i,i}a_{j,j}$ from $\alpha _1$ must correspond to $Id^2=Id$ permutation track in $\beta _1$. Indeed, for  $ (a_{i,t_1}a_{t_1,i})(a_{j,t_2}a_{t_2,j})$ where $t_1=i$ and $t_2=j$ we obtain $(a_{i,i}a_{j,j})^2$.

 In that same way, every -Id permutation track $a_{i,j}a_{j,i}$ from $\alpha _1$ must correspond to~$(-Id)^2=Id$ permutation track in $\beta _1$. Indeed, for~$( a_{i,t_1}a_{t_1,i})(a_{j,t_2}a_{t_2,j})$ where~$t_1=j$ and~$t_2=i$ we obtain~$(a_{i,j}a_{j,i})^2$.

\vskip 0.5 truecm

\begin{center}All other monomials appear twice:\end{center}

For every $[(a_{i,t_1}a_{t_1,j})(a_{j,t_2}a_{t_2,i})]$ in the permutation tracks of $-Id$ there exists another -Id permutation track $ [(a_{t_1,j}a_{j,t_2})(a_{t_2,i}a_{i,t_1})]$  where $t_1\ne t_2$, and equals $[(a_{j,t_2}a_{t_1,j})(a_{i,t_1}a_{t_2,i})]$ which is a permutation track of $Id$ where $t_1=t_2$.

In that same way, $\forall  (a_{i,t_1}a_{t_1,i})(a_{j,t_2}a_{t_2,j})$  in the permutation tracks of Id there exists another Id permutation track $(a_{t_1,i}a_{i,t_1})(a_{t_2,j}a_{j,t_2})$  where $t_1\ne t_2$ (notice that~$t_1=i$ and $t_2=j$ or vice versa were already been dealt in the monomials that originate from $\alpha _1$, so we receive an equal monomial in a \textbf{different} track), and $ (a_{i,t_1}a_{t_2,j})(a_{j,t_2}a_{t_1,i})$  which is a permutation track of $-Id$ where $t_1=t_2$.
\noindent Overall we obtain $f_{A^2}(x^2)\vDash f_A(x)^2$. 

\end{exa}

\begin{cor}
Every corner root of $f_{A^m}$ is an $m^{th}$ power of a corner root of $f_A$.
\end{cor}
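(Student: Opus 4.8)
The plan is to deduce this from Theorem 3.6, the description of corner roots in Definition 2.3 and Remark 2.4, and the uniqueness of tangible radicals in Remark 1.6. Write $f_A(x)=\sum_i\alpha_i x^i$ and $f_{A^m}(x)=\sum_i\beta_i x^i$, and let $\rho$ be a corner root of $f_{A^m}$. By definition there are two \emph{tangible} monomials $\beta_i x^i$ and $\beta_j x^j$, say with $i<j$, both leading at $x=\rho$; thus $\beta_i,\beta_j\in T$ and $\beta_i\rho^i=\beta_j\rho^j$ with $(\beta_l\rho^l)^\nu\le(\beta_i\rho^i)^\nu$ for every $l$. Since $\rho^{j-i}=\beta_i/\beta_j\in T$ and a ghost has only ghost powers, $\rho\in T$ as well.

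First I would read off the relevant coefficients of $f_A$. By Theorem 3.6, $\beta_l\models_{gs}\alpha_l^m$ for all $l$; a tangible element ghost-surpasses only itself, so $\beta_i=\alpha_i^m$ and $\beta_j=\alpha_j^m$, and since a ghost raised to any power stays a ghost, $\alpha_i,\alpha_j\in T$. Put $\beta:=\sqrt[m]{\rho}\in T$, which exists and is unique by Remark 1.6. From $\alpha_i^m\rho^i=\alpha_j^m\rho^j$ and $\rho=\beta^m$ we get $(\alpha_i\beta^i)^m=(\alpha_j\beta^j)^m$, hence the honest equality $\alpha_i\beta^i=\alpha_j\beta^j$ in $T$ by uniqueness of tangible $m$-th roots.

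It remains to verify that $\beta$ is a \emph{corner} root of $f_A$, i.e.\ that $\alpha_i x^i$ and $\alpha_j x^j$ are leading monomials of $f_A$ at $x=\beta$. For an arbitrary $l$, Theorem 3.6 gives $\beta_l^\nu\ge(\alpha_l^m)^\nu=(\alpha_l^\nu)^m$; using $\beta_i=\alpha_i^m$ and $\rho=\beta^m$,
\[
(\alpha_l^\nu)^m\rho^{l\nu}\;\le\;\beta_l^\nu\rho^{l\nu}\;=\;(\beta_l\rho^l)^\nu\;\le\;(\beta_i\rho^i)^\nu\;=\;(\alpha_i^\nu)^m\rho^{i\nu},
\]
so $(\alpha_l^\nu\beta^{l\nu})^m\le(\alpha_i^\nu\beta^{i\nu})^m$; taking $m$-th roots in the ordered value group yields $(\alpha_l\beta^l)^\nu\le(\alpha_i\beta^i)^\nu$. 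Hence at $x=\beta$ the value $\alpha_i\beta^i=\alpha_j\beta^j$ is maximal and is attained by two tangible monomials, so $f_A(\beta)$ is a sum of two equal tangibles (plus lower terms) and lies in $G$; by Definition 2.3, $\beta$ is a corner root of $f_A$, and $\beta^m=\rho$, as claimed.

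I expect the main obstacle to be the careful handling of the inequalities in the last display: some $\beta_l$ may be ghost while $\beta_i$ is tangible, so all comparisons must be phrased in terms of $\nu$-values, and one must be explicit that ``$\alpha_i x^i$ and $\alpha_j x^j$ are leading at $\beta$'' is precisely the hypothesis of Definition 2.3 that makes $\beta$ a corner root --- in particular that no intermediate essential monomial of $f_A$ spoils the intersection. A secondary point to note is the standing assumption that $R$ contains $m$-th roots, already used implicitly in Remark 2.4 and Claim 2.6.
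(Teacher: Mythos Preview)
Your argument is correct and takes a somewhat different route from the paper's own proof. The paper argues at the level of polynomial \emph{values}: from Theorem~3.6 it writes $f_{A^m}(x^m)=(f_A(x))^m+g(x^m)$ with $g\in G[x]$, evaluates at a corner root $\lambda^m$ of $f_{A^m}$, and observes that the ghostness of $f_{A^m}(\lambda^m)$ cannot come from the summand $g(\lambda^m)$ (else $\lambda^m$ would be a non-corner root), so it must come from $(f_A(\lambda))^m$, forcing $f_A(\lambda)\in G$. You instead work \emph{coefficientwise}: you pin down the two tangible leading monomials $\beta_ix^i$, $\beta_jx^j$ at $\rho$, use tangibility of $\beta_i,\beta_j$ together with $\beta_l\models_{gs}\alpha_l^m$ to force $\beta_i=\alpha_i^m$, $\beta_j=\alpha_j^m$ with $\alpha_i,\alpha_j\in T$, and then push the dominance inequalities through $m$-th roots to exhibit $\alpha_ix^i$, $\alpha_jx^j$ as the leading tangible monomials of $f_A$ at $\rho^{1/m}$.

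Your approach buys something concrete: it verifies the full statement of the corollary explicitly, showing that $\beta=\rho^{1/m}$ is a \emph{corner} root of $f_A$, whereas the paper's argument as written only concludes that $\lambda$ is a root (``eigenvalue'') and leaves the corner condition implicit. The paper's argument is shorter and more conceptual, since it never unpacks individual coefficients; your argument is longer but more transparent about exactly which monomials witness the corner. Both arguments rely on the same standing assumption that tangible $m$-th roots exist in $T$, which you rightly flag.
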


\begin{proof}
 According to Theorem 3.6. $f_{A^m} \vDash (f_A)^m$, meaning $$f_{A^m}(x^m)=(f_A(x))^m+g(x^m),\ where\ g(x)\in G[x].$$ For every $\lambda ^m$ a corner root of $f_{A^m}$ we know that $$f_{A^m}(\lambda ^m)\in G.$$ 

\noindent If $$f_{A^m}(\lambda ^m)=(f_A(\lambda ))^m\in G\ then\ f_A(\lambda ) \in G$$ and therefore $\lambda$ is an eigenvalue of $A$. 

\noindent If $$f_{A^m}(\lambda ^m)=g(\lambda^m)\in G$$ then $\lambda ^m$ is not obtained as common value of two leading tangible monomials in $f_{A^m}$ but obtained as a value of a leading ghost monomial and therefore not a corner root of~$f_{A^m}$ in contradiction to the assumption in the Corollary.
\end{proof}

\vskip 0.75 truecm

\end{document}